\numberwithin{equation}{section}
\newtheorem{thm}{Theorem}[section]
\newtheorem{lem}[thm]{Lemma}
\newtheorem{prop}[thm]{Proposition}
\newtheorem{rem}[thm]{Remark}
\theoremstyle{plain}
\theoremstyle{plain}
\newtheorem{theorem}{Theorem} [section]
\theoremstyle{definition}
\theoremstyle{remark}
\newtheorem{remark}[theorem]{Remark}
\numberwithin{theorem}{section}
\numberwithin{equation}{section}
\numberwithin{figure}{section}
\begin{document}
\title[Regularity for the obstacle problem]{Free boundary regularity near the fixed boundary for the fully nonlinear obstacle problem}

\author[Emanuel Indrei]{Emanuel Indrei}


\def\signei{\bigskip\begin{center} {\sc Emanuel Indrei\par\vspace{3mm}Department of Mathematics\\  
Purdue University\\
West Lafayette, IN 47907, USA\\
email:} {\tt eindrei@purdue.edu}
\end{center}}

\makeatletter
\def\blfootnote{\xdef\@thefnmark{}\@footnotetext}
\makeatother

\date{}

\maketitle

\begin{abstract}
The interior free boundary theory for linear elliptic operators in higher dimensions was developed by Caffarelli \cite{MR0454350} in the low regularity context. In these notes, the up-to-the boundary free boundary regularity is discussed for nonlinear elliptic operators based on a different approach. 
\end{abstract}

\section{Introduction} 

Caffarelli proved that if $L$ is a linear uniformly elliptic operator and $u \ge 0$ solves 
\begin{equation}  \label{eq}
L(D^{2}u)=\chi_{\{u>0\}} \hskip .2in \text{in }B_{1}
\end{equation}

\noindent 
then for $x \in \Gamma=\partial \{u>0\} \cap B_{1}$ with positive Lebesgue density for $\{u=0\}$, i.e. satisfying 
$$
\displaystyle \liminf_{r\rightarrow 0^+}\frac{|B_r(x) \cap \{u=0\}|}{|B_r(x)|}>0 \hskip .2in \footnote{$|B_{r}(x) \cap \{u=0\}| \ge a r^n$ for $r>0$ small enough is usually assumed for convenience; $\displaystyle \limsup_{r\rightarrow 0^+}\frac{|B_r(x) \cap \{u=0\}|}{|B_r(x)|}>0$ is sufficient.}
$$
there is a Lipschitz function $g$ such that $\Gamma \cap B_s(x)$ admits a representation with respect to $g$ in a coordinate system for some $s>0$. The Lipschitz regularity can be improved to $C^1$ and higher regularity follows (up to analyticity) via a theorem of Kinderlehrer and Nirenberg \cite{MR0440187}. Caffarelli's theorem is optimal in the sense that there exists a solution when $L=\Delta$ for which there is a free boundary point with zero Lebesgue density for $\{u=0\}$ and in a neighborhood of the point the free boundary develops a cusp singularity and is not a graph in any system of coordinates \cite{MR0516201}.

In a recent work \cite{I}, the author proved that for solutions of \eqref{eq} with zero Dirichlet boundary data, with $L$ replaced by a convex fully nonlinear uniformly elliptic operator $F$, if $x \in \partial B_1 \cap \overline{\Gamma}$, then $\Gamma$ can be represented as the graph of a $C^1$ function in a neighborhood of $x$. 
There are two surprising differences between the interior and boundary result: first, there are no density assumptions in the boundary case (in particular, cusp-type singularities do not exist); second, there is an example which generates a free boundary which is $C^1$ with a specific Dini modulus of continuity for the free normal (see e.g. \cite[Remark 8.8]{MR2962060}). 

In his original approach, Caffarelli estimated pure second derivatives from below. The linear approach developed thereafter to handle regularity near the fixed boundary involves specific barrier constructions involving the operator and monotonicity formulas \cite{MR1392033, MR1950478, MR2281197}. The nonlinear method is based on understanding a maximal mixed partial derivative along a preferred direction.

In what follows, $F$ satisfies
\begin{itemize}
\item $F(0)=0$.
\item $F$ is uniformly elliptic with ellipticity constants $\lambda_{0}$, $\lambda_{1}>0$
such that
$$
\mathcal{P}^{-}(M-N)\le F(M)-F(N)\le\mathcal{P}^{+}(M-N),
$$
where $M$ and $N$ are symmetric matrices and $\mathcal{P}^{\pm}$
are the Pucci operators
$$
\mathcal{P}^{-}(M):=\inf_{\lambda_{0} \le N\le\lambda_{1}} \text{tr}(NM),\qquad\mathcal{P}^{+}(M):=\sup_{\lambda_{0}\le N\le\lambda_{1}}\text{tr} (NM).
$$
\item $F$ is convex and $C^1$.
\end{itemize}

Let $\Omega$ be an open set and $B_r^+=\{x: |x|<r, x_n>0\}$. A continuous function $u$ belongs to $P_r^+(0,M, \Omega)$ if $u$ satisfies in the viscosity sense

\begin{itemize}
\item $F(D^2 u)=\chi_\Omega$ in $B_r^+$;\\
\item $||u||_{L^\infty(B_r^+)} \le M$;\\
\item $u=0$ on $\{x_n=0\} \cap \overline{B_1^+}=:B'_{1}$.
\end{itemize}

In \cite{MR3513142} it was shown that $W^{2,p}$ solutions are $C^{1,1}$ (see also \cite{MR3198649, MR3542613} for the interior case). 
Furthermore, given $u \in P_r^+(0,M, \Omega)$, the free boundary is denoted by 
$
\Gamma=\partial \Omega \cap B_r^+.
$ 

A \textit{blow-up limit of $\{u_j\}$} $\subset P_1^+(0,M,\Omega)$ is a limit of the form $$\lim_{k \rightarrow \infty} \frac{u_{j_k}(s_kx)}{s_k^2},$$ where $\{j_k\}$ is a subsequence of $\{j\}$ and $s_k \rightarrow 0^+$. 

In \S \ref{S}, non-transversal intersection is shown for $\Omega=(\{u \neq 0\} \cup \{\nabla u \neq 0\}) \cap \{x_n>0\}$ and a problem in superconductivity is discussed in which $\Omega =\{\nabla u \neq 0\}) \cap \{x_n>0\}$; in \S \ref{S3}, $C^1$ regularity is proved when $u \ge 0$; last, some of the technical details are shown in the appendix \S \ref{S4}.

\section{Non-transversal intersection and classification of blow-up limits} \label{S}

One of the main results discussed in this section is the following. 
\begin{thm}\label{tt}
There exists $r_0>0$ and a modulus of continuity $\omega$ such that 
$$\Gamma(u) \cap B_{r_0}^+ \subset \{x: x_n \le \omega(|x'|)|x'|\}$$ for all $u \in P_1^+(0,M, \Omega)$ provided $0 \in \overline{\Gamma(u)}$ and $\Omega=(\{u \neq 0\} \cup \{\nabla u \neq 0\}) \cap \{x_n>0\}$.  
\end{thm}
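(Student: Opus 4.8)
The plan is to argue by contradiction via a compactness/blow-up argument. Suppose the conclusion fails. Then for every modulus of continuity $\omega$ and every $r_0>0$ there is a solution $u \in P_1^+(0,M,\Omega)$ with $0 \in \overline{\Gamma(u)}$ and a free boundary point $x^j \in \Gamma(u_j) \cap B_{r_0}^+$ with $x^j_n > \omega(|(x^j)'|)|(x^j)'|$. Choosing $\omega$ to go to zero slowly and $r_0 \to 0$, we extract a sequence $u_j$, points $x^j \to 0$ with $x^j_n/|(x^j)'| \to \infty$ (so the free boundary points approach the fixed boundary along a direction that becomes vertical), and we rescale at the natural parabolic scale $s_j = |x^j|$, setting $v_j(x) = u_j(s_j x)/s_j^2$. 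By the $C^{1,1}$ estimates quoted from \cite{MR3513142}, the $v_j$ are uniformly bounded in $C^{1,1}_{\mathrm{loc}}$ of the closed half-space, so a subsequence converges in $C^1_{\mathrm{loc}}$ to a global solution $v_\infty$ in $\{x_n \ge 0\}$ with $v_\infty = 0$ on $\{x_n = 0\}$, $F(D^2 v_\infty) = \chi_{\Omega_\infty}$ in $\{x_n>0\}$, and with a free boundary point $\bar x$ on the unit sphere having $\bar x_n = 0$ is excluded — rather, the rescaled points $x^j/s_j$ lie on the unit sphere with $x^j_n/s_j \to 1$ along the normal direction, so $\bar x = e_n$ is a free boundary point of $v_\infty$ lying strictly inside the half-space.

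The next step is to classify this blow-up limit $v_\infty$ and derive a contradiction. Since $v_\infty$ vanishes on $\{x_n = 0\}$ together with (by $C^{1,1}$) its gradient, and since $\Omega_\infty = (\{v_\infty \neq 0\} \cup \{\nabla v_\infty \neq 0\}) \cap \{x_n>0\}$, the point is that near the fixed boundary $v_\infty$ must satisfy $F(D^2 v_\infty) = 1$ in a full one-sided neighborhood of $0$ — because any point on $\{x_n = 0\}$ with $v_\infty$ and $\nabla v_\infty$ both vanishing at it nonetheless has arbitrarily close interior points where either $v_\infty \neq 0$ or $\nabla v_\infty \neq 0$ unless $v_\infty \equiv 0$ there, and one shows $v_\infty \not\equiv 0$ using that $e_n$ is a genuine free boundary point. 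Then $v_\infty$ is a global solution of $F(D^2 v_\infty) = 1$ in a slab adjacent to $\{x_n=0\}$ with zero Cauchy data on $\{x_n=0\}$; by the boundary Cauchy–Kovalevskaya-type uniqueness (or by the one-dimensional ODE comparison after slicing, using convexity of $F$), $v_\infty$ is forced to be a fixed quadratic polynomial in $x_n$ alone near the fixed boundary, in particular $\{v_\infty = 0\} \cap \{x_n>0\}$ is empty near $0$, contradicting that the free boundary of $v_\infty$ reaches all the way to $e_n$ — or more precisely, contradicting the existence of the free boundary point at $e_n$ once one tracks that $e_n$ lies in the region where this forced form holds. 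Carrying the scales correctly so that $e_n$ is the blow-up location is what makes the non-transversality quantitative.

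The main obstacle I expect is the classification/rigidity step: showing that a global solution of the fully nonlinear obstacle problem in a half-space with zero Dirichlet data (and the extra structure on $\Omega$) whose free boundary touches the fixed boundary asymptotically must in fact not have a free boundary near that touching point, i.e. that transversal intersection is impossible. This requires the convexity and $C^1$ regularity of $F$ in an essential way (to run a comparison with the explicit one-dimensional solution $t \mapsto \beta t^2$ where $F(\beta\, e_n \otimes e_n) = 1$, using Pucci bounds to trap $v_\infty$ between such barriers from above and below), and it requires the $C^{1,1}$ bound to guarantee $v_\infty$ and $\nabla v_\infty$ vanish on $\{x_n=0\}$ so that the Cauchy data is genuinely zero. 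The passage from "no transversal blow-up" to the uniform modulus $\omega$ in the statement is then a standard compactness contradiction: if no single $\omega$ worked, the diagonal sequence above would produce a transversal blow-up, which we have just ruled out.
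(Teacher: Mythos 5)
Your overall skeleton --- argue by contradiction, rescale at the scale $s_j=|x^j|$ of the offending free boundary points, pass to a global limit $v_\infty$ with a free boundary point at some $\bar x\in\partial B_1$ with $\bar x_n>0$, and then derive a contradiction from a classification of $v_\infty$ --- is the same as the first half of the paper's argument (a minor point: the negation of the statement only yields points in a fixed cone $\{x_n>\epsilon|x'|\}$, so $\bar x_n\ge \epsilon/\sqrt{1+\epsilon^2}>0$ rather than $\bar x=e_n$; this is harmless). The problem is the classification step, which is where the entire difficulty of the theorem lives, and your argument for it does not work.

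The gap is your claim that $v_\infty$ has ``genuinely zero Cauchy data'' on $\{x_n=0\}$ and is therefore forced to be $\beta x_n^2$. The boundary condition is Dirichlet only: $v_\infty=0$ on $\{x_n=0\}$ kills the tangential derivatives, but the normal derivative $\partial_n v_\infty(x',0)$ need not vanish away from the origin --- the $C^{1,1}$ bound together with $\nabla u_j(0)=0$ only gives $|\partial_n v_j(x',0)|\le C|x'|$ after rescaling, which does not tend to zero. Concretely, the polynomials $u_0(x)=ax_1x_n+bx_n^2$ with $a\neq 0$ are global $C^{1,1}$ solutions with zero Dirichlet data that are not functions of $x_n$ alone, and ruling these out is precisely the content of the paper's Propositions \ref{th1} and \ref{kee}; your rigidity claim would exclude them for free, so it cannot be correct. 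Your auxiliary claim that $F(D^2v_\infty)=1$ holds in a full one-sided neighborhood of the fixed boundary is also unjustified: the equation holds only in $\Omega_\infty$, and the geometry of $\Omega_\infty$ near $\{x_n=0\}$ is exactly what is unknown. Finally, ``Cauchy--Kovalevskaya-type uniqueness'' is not available for $C^{1,1}$ viscosity solutions of a fully nonlinear equation satisfied only on an open subset. The paper instead proves a dichotomy: either every blow-up is a half-space solution $bx_n^2$ (in which case your contradiction at $\bar x$ with $\bar x_n>0$ goes through, and this is the paper's first case), or some blow-up equals $ax_1x_n+bx_n^2$ with $a\neq0$; in the latter case a directional monotonicity inequality $C_0\partial_{x_1}\tilde u_j-\tilde u_j\ge 0$ (Lemmas \ref{m}, \ref{c}, \ref{d}) combined with Hopf's lemma shows the interior free boundary of the rescalings is empty in full half-balls, so by boundary Evans--Krylov the $\tilde u_j$ are $C^{2,\alpha}$ there, which is incompatible with $0\in\overline{\Gamma}$ since $F(D^2u)$ would then be continuous across the free boundary. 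Without an argument of this type (or some substitute for the missing monotonicity formula), your proof does not close.
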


If one varies the boundary data, then non-transversal intersection may not hold \cite[see Examples 3 \& 4]{MR2281197}. The difficulty in the fully nonlinear context is that monotonicity formulas are not available and a classification of blow-up limits requires a new approach: if blow-up limits are not half-space solutions, then a certain regularity property holds. More precisely:  

\begin{prop} \label{kee}
Suppose $\Omega=(\{u \neq 0\} \cup \{\nabla u \neq 0\}) \cap \{x_n>0\}$ and $\{u_j\} \subset P_1^+(0,M, \Omega)$. If $0 \in \overline{\{u_j \neq 0\}}$ and $\nabla u_j(0)=0$, then one of the following is true:\\
(i) all blow-up limits of $\{u_j\}$ at the origin are of the form $u_0(x)=bx_n^2$ for $b>0$;\\
(ii) there exists $\{u_{k_j}\} \subset \{u_j\}$ such that for all $R \ge 1$, there exists $j_R \in \mathbb{N}$ such that for all $j \ge j_R$, $$u_{k_j} \in C^{2,\alpha}(B_{\frac{Rr_{j}}{4}}^+),$$ where the sequence $\{r_{j}\}$ depends on $\{u_j\}$.  
\end{prop}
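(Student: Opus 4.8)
My plan is to argue by dichotomy on the behavior of $\nabla u_j$ near the origin, using a compactness/normalization argument. Since $0 \in \overline{\{u_j \neq 0\}}$ and $\nabla u_j(0) = 0$, for each $j$ I would define a natural length scale $r_j$ measuring how far one must look from the origin to see the set $\{u_j \neq 0\}$ (or $\{\nabla u_j \neq 0\}$) at "full strength" — concretely, something like $r_j = \sup\{r : \sup_{B_r^+} |\nabla u_j| \le \theta r\}$ for a suitably small dimensional/ellipticity constant $\theta$, or a comparable quantity built from the $C^{1,1}$ norm guaranteed by \cite{MR3513142}. The point of $r_j$ is that the rescaled functions $v_j(x) := u_j(r_j x)/r_j^2$ are normalized: they lie in $P_{1/r_j}^+(0, M', \Omega_j)$ for a uniform $M'$ by the $C^{1,1}$ bound, they solve $F(D^2 v_j) = \chi_{\Omega_j}$, and by construction $\sup_{B_1^+}|\nabla v_j| = \theta$ while $\nabla v_j(0) = 0$. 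Passing to a subsequence, $v_j \to v_\infty$ in $C^1_{\mathrm{loc}}$ (and weakly in $W^{2,p}_{\mathrm{loc}}$) on the whole half-space $\{x_n > 0\}$, with $v_\infty$ a global solution vanishing on $\{x_n = 0\}$.

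Next I would split into cases according to $v_\infty$. If $v_\infty$ is a half-space solution, i.e. $v_\infty(x) = b x_n^2$ with $b > 0$ (which, by the boundary condition, convexity of $F$, and $F(0)=0$ forcing $2b$ to be the unique root of $t \mapsto F(t e_n \otimes e_n) = 1$, is the only admissible form of a one-dimensional global solution with the right scaling), then I claim alternative (i) holds: every blow-up limit of $\{u_j\}$ at the origin is of this form. This requires upgrading the convergence along the particular scale $r_j$ to convergence along \emph{all} scales $s_k \to 0^+$; here I would use a monotonicity-free argument — e.g. a nondegeneracy estimate plus the optimal regularity — to show that once the blow-up at scale $r_j$ is the half-space solution, the normalized quantities controlling the free boundary geometry (such as $\sup_{B_r^+}|\nabla u_j - 2b x_n e_n|/r$ appropriately normalized, or the "cleanliness" of $\Omega$ in $B_r^+$) are essentially monotone/stable, so any other blow-up sequence produces the same limit. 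This is essentially the dichotomy underlying Theorem \ref{tt}: non-half-space blow-ups are the obstruction, and ruling them out pointwise is how non-transversality is eventually proved.

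If instead $v_\infty$ is \emph{not} of the form $b x_n^2$, then $v_\infty$ is a global solution of $F(D^2 v_\infty) = \chi_{\Omega_\infty}$ in $\{x_n > 0\}$, vanishing on $\{x_n = 0\}$, with $\nabla v_\infty(0) = 0$ and $\sup_{B_1^+}|\nabla v_\infty| = \theta$, that is genuinely two-dimensional in character. The strategy is to show that near the origin (at unit scale, in the rescaled picture) $v_\infty$ — and hence $v_j$ for large $j$ — enjoys interior $C^{2,\alpha}$ estimates on balls of a fixed fraction of the scale. The mechanism: since $\nabla v_\infty(0) = 0$ but $v_\infty \not\equiv b x_n^2$, the origin is a free boundary point of a specific type, and the set $\Omega_\infty \cap B_{1/4}^+$ either is empty (so $F(D^2 v_\infty) = 0$ there and full Schauder/Evans–Krylov regularity for convex $F$ applies) or is "thin" enough that Caffarelli-type estimates force $C^{1,1}$ with a quantitative modulus; combined with convexity of $F$ and Evans–Krylov, one gets $C^{2,\alpha}$ on $B_{1/4}^+$ (using the boundary version up to $\{x_n=0\}$, cf. \cite{MR3513142}). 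Unwinding the rescaling $x \mapsto r_j x$, $C^{2,\alpha}$ of $v_j$ on $B_{R/4}^+$ for each fixed $R$ (once $j$ is large enough that $1/r_j \ge R$ and the limit estimate has kicked in) becomes $u_{k_j} \in C^{2,\alpha}(B_{R r_j/4}^+)$, which is exactly alternative (ii).

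The main obstacle I anticipate is the case analysis on $v_\infty$ when it is not a half-space solution: specifically, showing that a genuinely two-dimensional global solution with a degenerate gradient at an interior-of-the-trace boundary point must have $\Omega_\infty$ empty (or negligibly thin) near that point. Without monotonicity formulas this cannot be done by frequency/Almgren arguments; instead one must exploit the structure $\Omega = (\{u \neq 0\} \cup \{\nabla u \neq 0\}) \cap \{x_n > 0\}$ together with the boundary condition $u = 0$ on $\{x_n = 0\}$ — the point being that $\nabla v_\infty(0) = 0$ and $v_\infty(0) = 0$ force $v_\infty$ to be small of order $|x|^2$ near $0$, and any "non-contact" region $\Omega_\infty$ touching $0$ would have to carry $F(D^2 v_\infty) = 1$, which is incompatible with $v_\infty$ being this flat unless the contact set $\{v_\infty = 0, \nabla v_\infty = 0\}$ has positive density there — but positive density of the contact set at a boundary point is precisely what Theorem \ref{tt}'s non-transversality (once proved) would forbid, so one must instead extract the regularity \emph{directly} from the flatness plus convexity, via an iteration on dyadic scales. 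Making that iteration close — balancing the gain from flatness against the loss from the non-zero right-hand side on the thin region — is where the real work lies, and it is why the conclusion is phrased as the somewhat technical "$C^{2,\alpha}$ on $B_{R r_j/4}^+$ for all $R$" rather than a clean statement about the limit.
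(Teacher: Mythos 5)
There is a genuine gap. Your proposal never identifies the structure of the non--half-space blow-up, and that identification is the engine of the whole proof. The paper first proves (Proposition \ref{th1}) that if not all blow-ups are $bx_n^2$, then the extremal quantity $N=\limsup\,x_n^{-1}\sup\partial_e u$ is positive and a blow-up along the maximizing sequence is exactly the polynomial $u_0(x)=ax_1x_n+bx_n^2$ with $a\neq 0$ (via a boundary Harnack argument from \cite{MR3513142}). This explicit form is then exploited quantitatively: Lemma \ref{c} gives $|\nabla u_0|\ge c$ on suitable cylinders in the $e_1$-direction, Lemma \ref{d} gives the directional monotonicity $C_0\partial_{x_1}u_0-u_0\ge 0$ on $B_R^+$, and Lemma \ref{m} transfers this monotonicity to $\tilde u_j$ for large $j$. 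A ball-sliding argument in the $e_1$-direction combined with Hopf's lemma then shows that the interior of $\{\tilde u_j=0\}$ cannot meet $B_{R/2}^+$, so $|B_{R/2}^+\setminus\Omega_j|=0$, the equation $F(D^2\tilde u_j)=1$ holds on all of $B_{R/2}^+$, and the boundary Evans--Krylov theorem yields $\tilde u_j\in C^{2,\alpha}(B_{R/4}^+)$. None of this machinery appears in your outline; you explicitly defer the decisive step (``the case analysis on $v_\infty$ when it is not a half-space solution'') to an unspecified dyadic iteration that you acknowledge you cannot close.

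Moreover, the heuristic you offer in its place is based on a false premise and points in the wrong direction. You argue that $v_\infty(0)=0$, $\nabla v_\infty(0)=0$ forces $v_\infty=O(|x|^2)$ near $0$, which you claim is ``incompatible with $F(D^2 v_\infty)=1$'' on a region touching the origin unless the contact set has positive density. But the actual non--half-space blow-up $ax_1x_n+bx_n^2$ is itself $O(|x|^2)$ and satisfies $F(D^2 u_0)=1$ globally, so quadratic flatness excludes nothing. You also suggest the mechanism for $C^{2,\alpha}$ is that $\Omega_\infty$ is \emph{empty} near $0$ (giving $F(D^2 v_\infty)=0$); the paper's conclusion is the opposite: the \emph{complement} of $\Omega_j$ has measure zero in $B_{R/2}^+$, so the right-hand side is identically $1$ there. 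Finally, in alternative (i) your claim that convergence at the single scale $r_j$ upgrades to all scales is asserted via an unproved ``stability'' of the half-space profile; the paper avoids this entirely by observing that $N$ is defined independently of any blow-up sequence, so $N=0$ directly forces every blow-up limit to be one-dimensional, hence of the form $bx_n^2$.
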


The proof relies on the fact that if not all blow-up solutions are half-space solutions, then one can construct a specific sequence producing a limit of the form $ax_1x_n+bx_n^2$.

\begin{prop} \label{th1}
Let $\{u_j\} \subset P_1^+(0,M, \Omega)$ and suppose $0 \in \overline{\{u_j \neq 0\}}$, $\{\nabla u_j \neq 0\} \cap \{x_n>0\} \subset \Omega$, $\nabla u_j(0)=0$. Then one of the following is true:\\
(i) all blow-up limits of $\{u_j\}$ at the origin are of the form $u_0(x)=b x_n^2$ for some $b >0$;\\ 
(ii) there exists a blow-up limit of $\{u_j\}$ of the form $ax_1x_n+bx_n^2$ for $a \neq 0$, $b \in \mathbb{R}$.
\end{prop}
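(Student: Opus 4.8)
The plan is to prove the implication ``(i) fails $\Rightarrow$ (ii) holds'': assuming that not every blow-up limit of $\{u_j\}$ at the origin has the form $bx_n^2$, $b>0$, I would produce a blow-up limit equal to $ax_1x_n+bx_n^2$ with $a\neq0$. The first step is compactness. By the uniform $C^{1,1}$ estimate for the class $P_1^+(0,M,\Omega)$ of \cite{MR3513142}, each rescaling $u_{j,s}(x):=u_j(sx)/s^2$ satisfies $\|u_{j,s}\|_{C^{1,1}(\overline{B_R^+})}\le C(R,n,\lambda_0,\lambda_1,M)$, so along subsequences $u_{j_k,s_k}\to u_0$ in $C^1_{\loc}(\overline{\R^n_+})$ with $D^2u_{j_k,s_k}\rightharpoonup D^2u_0$ weak-$\ast$ in $L^\infty_{\loc}$; by stability of viscosity solutions any blow-up limit $u_0$ is a global solution with $u_0=0$ on $\{x_n=0\}$, $u_0(0)=0$, $\nabla u_0(0)=0$, $|D^2u_0|\le C$, $|u_0(x)|\le C|x|^2$, and $F(D^2u_0)(x)=1$ at every $x\in\{x_n>0\}$ where $\nabla u_0(x)\neq0$ --- the latter passing to the limit from $\{\nabla u_j\neq0\}\cap\{x_n>0\}\subset\Omega$ by uniform convergence of gradients. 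I would also record the non-degeneracy estimate $\sup_{B_r^+}|u_j|\ge c\,r^2$ whenever $0\in\overline{\{u_j\neq0\}}$ (a standard comparison with a paraboloid $u_j(x_0)+\nabla u_j(x_0)\cdot(x-x_0)+\tfrac c2|x-x_0|^2$ centered at a nearby point $x_0\in\{\nabla u_j\neq0\}\cap\{x_n>0\}$, using $F(D^2u_j)=1$ there, $F(0)=0$, and $u_j=0$ on $\{x_n=0\}$); it survives every blow-up, giving $\sup_{B_r^+}|u_0|\ge c\,r^2$ for all $r>0$.

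The second step reduces the failure of (i) to a blow-up limit with genuine tangential dependence. Since $F$ is $C^1$, differentiating $F(D^2u_0)=1$ on $\{\nabla u_0\neq0\}\cap\{x_n>0\}$ in a direction $e\perp e_n$ shows that $\partial_e u_0$ solves there the linear uniformly elliptic equation $\sum_{a,b}F_{ab}(D^2u_0)\,\partial_{ab}(\partial_e u_0)=0$, while $\partial_e u_0=0$ on $\{x_n=0\}$ and $|\partial_e u_0(x)|\le Cx_n$ because $\nabla u_0$ is Lipschitz and vanishes on the flat part. Suppose $\partial_i u_0\equiv0$ for every $i<n$; then $u_0=\phi(x_n)$ with $\phi(0)=\phi'(0)=0$, and ellipticity together with $F(0)=0$ forces $\phi''\in\{0,c_1\}$ a.e., where $c_1>0$ is the unique solution of $F(c_1e_n\otimes e_n)=1$. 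Non-degeneracy excludes $\phi''=0$ on a right neighbourhood of $0$; moreover $\phi''=c_1$ wherever $\phi'\neq0$, and $\phi'$ is nondecreasing and positive for $x_n>0$, so $\phi''\equiv c_1$ and $u_0=\tfrac{c_1}2x_n^2$, contradicting the failure of (i). Hence, after a rotation of the $x'$-variables --- which replaces $F$ by $\widehat F(M):=F(RMR^{T})$, still convex, $C^1$, with $\widehat F(0)=0$, and uniformly elliptic with the same constants since $\mathcal{P}^{\pm}$ are rotation invariant --- there is a blow-up limit $u_0$ with $\partial_1u_0\not\equiv0$, hence $\partial_{1n}u_0\not\equiv0$.

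The heart of the proof --- the step I expect to be the main obstacle --- is to pass from such a $u_0$ to a blow-up limit $u_\ast$ whose tangential Hessian vanishes identically, $\partial_{ij}u_\ast\equiv0$ for all $i,j<n$, while $\partial_{1n}u_\ast\not\equiv0$. Here the fully nonlinear setting departs from the linear one: with no monotonicity formula one cannot blow up at a self-similar rate, so instead I would track a \emph{maximal mixed partial derivative along a preferred direction}. For a rescaling $v$ and a tangential unit vector $e$ set $S_e(v;r):=\sup_{B_r^+}\partial_e v/x_n$ (finite by the $C^{1,1}$ bound); the failure of (i) and the second step guarantee, after passing to a subsequence $\{u_{k_j}\}$ and scales $r_j\to0^+$, that $\sup_{e}S_e(u_{k_j};r_j)$ stays bounded away from $0$ and, at the selected scale, is realized not on the flat boundary but at a definite height inside $\{x_n>0\}$. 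Passing to the limit of $u_{k_j}(r_j\cdot)/r_j^2$ and rotating the maximizing direction to $e_1$, one obtains a blow-up limit $u_\ast$ for which $w:=\partial_1u_\ast/x_n$ attains its supremum $c>0$ at an interior point $\xi$; near $\xi$ one has $\partial_1u_\ast>0$, hence $\nabla u_\ast\neq0$ and $F(D^2u_\ast)=1$, so $\partial_1u_\ast$ solves the linearized equation there and its quotient $w$ solves a uniformly elliptic equation with locally bounded drift and no zeroth-order term; the strong maximum principle forces $w\equiv c$ near $\xi$, and an open--closed continuation along $\{x_n>0\}$ --- using that $\partial_1u_\ast=cx_n>0$ keeps $\nabla u_\ast\neq0$ and hence the linearized equation in force on the whole of $\{x_n>0\}$ --- gives $\partial_1u_\ast\equiv cx_n$, so that $\Omega_\ast\cap\{x_n>0\}=\{x_n>0\}$. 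Iterating this over a basis of tangential directions yields $\partial_{ij}u_\ast\equiv0$ for all $i,j<n$ with $\partial_{1n}u_\ast=c\neq0$. The delicate points, which I expect to carry the bulk of the work, are: selecting the scale so that the extremal configuration stays strictly interior rather than collapsing onto $\{x_n=0\}$; ensuring the limit is governed by the linearized equation on a full half-ball; and the iteration over directions --- together these replace the role of the monotonicity formula.

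Granting the output of the third step, the conclusion is elementary. Since $\partial_{ij}u_\ast\equiv0$ for $i,j<n$, for each fixed $x_n$ the map $x'\mapsto u_\ast(x',x_n)$ is affine, so $u_\ast(x',x_n)=x'\cdot\alpha(x_n)+q(x_n)$ with $\alpha(0)=0$ and $q(0)=q'(0)=0$ (from $u_\ast=0$ on $\{x_n=0\}$ and $\nabla u_\ast(0)=0$). The bound $|D^2u_\ast|\le C$ forces $\alpha''\equiv0$, hence $\alpha(x_n)=\alpha x_n$ with $\alpha=(\partial_{1n}u_\ast,\dots,\partial_{(n-1)n}u_\ast)\neq0$; then $\nabla u_\ast\neq0$ throughout $\{x_n>0\}$, so $F(D^2u_\ast)=1$ there, and since $D^2u_\ast$ has constant mixed entries $\alpha_i$ and $(n,n)$-entry $q''(x_n)$, monotonicity of $F$ in the $e_n\otimes e_n$ direction pins $q''$ to the unique constant $2b$ solving this scalar equation, so $q(x_n)=bx_n^2$. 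A final rotation aligning $\alpha$ with $e_1$ gives $u_\ast=ax_1x_n+bx_n^2$ with $a=|\alpha|\neq0$ and $b\in\R$, which is alternative (ii).
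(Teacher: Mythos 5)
Your proposal follows essentially the same route as the paper: the paper's proof is built on the quantity $N=\limsup_{|x|\to0,\,x_n>0}\frac1{x_n}\sup_{u\in\{u_j\}}\sup_{e\perp e_n}\partial_e u(x)$ --- exactly your maximal normalized tangential derivative --- with the dichotomy $N=0$ (forcing every blow-up to be one-dimensional, hence $bx_n^2$) versus $N>0$ (blowing up along the extremizing sequence at scale $r_k=|x^k|$ to get $\partial_{x_1}u_0(y)=Ny_n$ while $|\partial_{x_1}u_0|\le Nx_n$ everywhere, whence $u_0=ax_1x_n+bx_n^2$). The one delicate point you flag as the main obstacle --- that the extremal configuration might collapse onto $\{x_n=0\}$, where an interior strong maximum principle is unavailable --- is exactly what the paper resolves by invoking the boundary Harnack argument of \cite{MR3513142} rather than by arranging the touching point to be interior.
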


\begin{proof}
Let $$N:= \limsup_{|x|\rightarrow 0, x_n>0} \frac{1}{x_n} \sup_{u \in \{u_j\}} \sup_{e \in \mathbb{S}^{n-2} \cap e_n^{\perp}} \partial_e u(x)$$ and consider a sequence $\{x^k\}_{k \in \mathbb{N}}$ with $x_n^k>0$, $u_{j_k} \in \{u_j\}$, and $e^k \in \mathbb{S}^{n-2} \cap e_n^{\perp}$ 
such that the previous limit is given by $$\lim_{k \rightarrow \infty} \frac{1}{x_n^k}  \partial_{e^k} u_{j_k}(x^k).$$ Note that $N<\infty$ by $C^{1,1}$ regularity for the class $P_1^+(0,M, \Omega)$ and the boundary condition (see \cite{MR3513142}). By compactness, $e^k \rightarrow e_1 \in \mathbb{S}^{n-2}$ (along a subsequence) so that up to a rotation, 
$$N= \lim_{k \rightarrow \infty} \frac{1}{x_n^k}  \partial_{x_1} u_{j_k}(x^k).$$   
\noindent Next, if $$\tilde u_j(x):= \frac{u_{k_j}(s_jx)}{s_j^2} \rightarrow u_0(x)$$ for some sequence $s_j \rightarrow 0^+$, where the convergence is in $C_{loc}^{1,\alpha}(\mathbb{R}_+^n)$ for any $\alpha \in [0,1)$, $u_0 \in C^{1,1}(\mathbb{R}_+^n)$ satisfies the following PDE in the viscosity sense
\begin{equation} \label{m2}
\begin{cases}
F(D^{2}u_0)=1 & \text{a.e. in }\mathbb{R}_+^n\cap\Omega_0\\
|\nabla u_0|=0 & \text{in }\mathbb{R}_+^n\backslash\Omega_0\\
u=0 & \text{on }\mathbb{R}_+^{n-1},
\end{cases}
\end{equation}
where $\Omega_0 =\{\nabla u_0 \neq 0 \} \cap \{x_n>0\}$. Note that 
\begin{equation} \label{part}
N \ge \lim_j \bigg| \frac{\partial_{x_i}  u_{k_j}(s_j x)}{s_j x_n}\bigg| = \lim_j \bigg| \frac{\partial_{x_i} \tilde u_j(x)}{x_n} \bigg| =\bigg|\frac{\partial_{x_i} u_0(x)}{x_n}\bigg|
\end{equation}
for all $i \in \{1,\ldots, n-1\}$. 
If $N=0$, then $\partial_{x_i} u_0=0$ for all $i \in \{1,\ldots,n-1\}$ so that $u_0(x)=u_0(x_n)$ and the conditions readily imply $u_0(x_n)=bx_n^2$. Since $N$ does not depend on the sequence $\{s_j\}$ it follows that in this case all blow-up limits have the previously stated form. Suppose that $N>0$, let $r_k=|x^k|$, and consider the re-scaling of $u_{j_k}$ with respect to $r_k$. Note that along a subsequence, $y^k:=\frac{x^k}{r_k} \rightarrow y \in \mathbb{S}^{n-1}$. By the choice of $r_k$, $$\lim_{k \rightarrow \infty} \frac{v(y^k)}{y_n^k}=\lim_{k \rightarrow \infty} \frac{\partial_{x_1} \tilde u_{k}(y^k)}{y_n^k}=\lim_{k \rightarrow \infty} \frac{\partial_{x_1} u_{j_k}(r_ky^k)}{r_ky_n^k} =N,$$ where $v= \partial_{x_1} u_0$. In particular, $$v(y)=Ny_n$$ and by an argument in \cite{MR3513142} (involving the boundary Harnack inequality), $u_0(x)=ax_1x_n+bx_n^2$ with $a \neq 0$. 
\end{proof}

\begin{proof}[Proof of Proposition \ref{kee}]
Either all blow-up limits are of the form $u_0(x)=bx_n^2$ or there exists a subsequence $$\tilde u_j(x)=\frac{u_{k_j}(r_jx)}{r_j^2}$$ producing a limit of the form $u_0(x)=ax_1x_n+bx_n^2$ for $a>0$ (up to a rotation). Let $c=c(a,b)$ be the constant from Lemma \ref{c} and note that since $\tilde u_j \rightarrow u_0$ in $C_{loc}^{1,\alpha}$, there exists $j_0=j_0(a, R) \in \mathbb{N}$ such that for every cylinder $S_{(\alpha, \beta)}(e_1)$ there exists $x \in S_{(\alpha, \beta)}(e_1) \cap B_R^+$ such that $|\nabla \tilde u_j(x)| \ge \frac{c}{2}$ for all $j \ge j_0$, where $R \ge 1$. Choose a constant  $C_0=C_0(a,b, R)>0$ such that $$C_0 \partial_{x_1} u_0 -u_0 \ge 0$$ in $B_R^+$ and $j_0' \ge j_0$ for which 
\begin{equation} \label{mon}
C_0 \partial_{x_1} \tilde u_j -  \tilde u_j \ge 0 \hskip .2 in \text{in $B_{\frac{R}{2}}^+$}
\end{equation} 
whenever $j \ge j_0'$ by Lemma \ref{m}. Now fix $j \ge j_0'$ and suppose $z \in \Gamma_i(\tilde u_j) \cap B_{\frac{R}{2}}^+$. Then there exists a ball $B \subset \text{int} \{\tilde u_j=0\} \cap B_{\frac{R}{2}}^+$ and a cylinder $S$ in the $e_1$- direction generated by $B$. Now select $x \in S \cap B_R^+$ for which $|\nabla \tilde u_j(x)|>0$ and $-R<x_1<-R/2$. In particular, there exists a small ball around $x$, say $\tilde B$ such that $F(D^2 \tilde u_j)=1$ in $\tilde B$ and one may assume $\tilde B \subset \{\tilde u_j \neq 0\}$. Note that $\tilde B$ is contained in the cylinder $S$ and let $E_t=\tilde B+te_1$ for $t \in \mathbb{R}$. If $t>0$ is such that $\overline{E_t} \cap \{\tilde u_j=0\} \neq \emptyset$, and for all $0\le s<t$, $E_s \cap \{\tilde u_j=0\} = \emptyset$, choose $y \in \overline{E_t} \cap \{\tilde u_j=0\}.$ If $\tilde u_j > 0$ in $\tilde B$, then by \eqref{mon} it follows that $\tilde u_j$ is strictly positive at a point in $\{\tilde u_j=0\}$, a contradiction. Thus $\tilde u_j < 0$ in $\tilde B$. By convexity of $F$
$$a_{kl} \partial_{kl} \tilde u_j \ge 0 \hskip .1in \text{in $E_t$}.$$ Since $0=\tilde u_j(y)>\tilde u_j(x)$ for $x \in E_t$ and $y$ satisfies an interior ball condition, then Hopf's lemma implies that $\frac{\partial}{\partial n} \tilde u_j(y)>0$, where $n$ is the outer normal to the ball at $y$. If there exists $z \in B_\delta(y)$ such that $\tilde u_j(z)>0$, then this contradicts the monotonicity, if $\delta>0$ is sufficiently small: $\overline{E_{\eta}} \subset B \subset int \{\tilde u_j=0\}$ for $\eta>0$ large enough and since $\tilde u_j(z)>0$, the monotonicity \eqref{mon} implies that $\tilde u_j(z+e_1s)>0$, for some $s>0$ such that $z+e_1s \in \{\tilde u_j=0\}$. Hence, $\tilde u_j\le 0$ on $B_\delta(y)$ and thus $\nabla \tilde u_j(y)=0$, a contradiction.      
The conclusion is that for $j \ge j_0'$, $$\Gamma_i(\tilde u_j) \cap B_{\frac{R}{2}}^+=\emptyset.$$ In particular, $(B_{\frac{R}{2}}^+ \setminus \Omega_j)^o=\emptyset$ and non-degeneracy implies that $|B_{\frac{R}{2}}^+\setminus \Omega_j|=0$. Thus the $C^{1,1}$ function $\tilde u_j$ satisfies $F(D^2 \tilde u_j)=1$ in $B_{\frac{R}{2}}^+$ in the viscosity sense and the up to the boundary Evans-Krylov theorem (see e.g. \cite{Saf}) implies that $\tilde u_j \in C^{2,\alpha}(B_{\frac{R}{4}}^+).$ In particular, $u_{k_j} \in C^{2,\alpha}(B_{\frac{Rr_j}{4}}^+).$ 
\end{proof}

\begin{thm}  \label{cul}
Suppose $u \in P_1^+(0,M, \Omega)$ and $\Omega=(\{u \neq 0\} \cup \{\nabla u \neq 0\}) \cap \{x_n>0\}$. If $0 \in \overline{\{u \neq 0\}}$ and $\nabla u(0)=0$, then the blow-up limit of $u$ at the origin has the form $$u_0(x)=ax_1x_n+bx_n^2$$ for $a, b \in \mathbb{R}$.
\end{thm}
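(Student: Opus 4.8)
The plan is to apply the two preceding propositions to the constant sequence $u_j \equiv u$. Since $0 \in \overline{\{u \neq 0\}}$ and $\nabla u(0) = 0$, Proposition \ref{th1} tells us that either every blow-up limit of $u$ at the origin is of the form $b x_n^2$ with $b > 0$ — which is the degenerate case $a = 0$ of the claimed form — or some blow-up limit has the form $a x_1 x_n + b x_n^2$ with $a \neq 0$. So the only thing left to rule out is the \emph{mixed} situation: that $u$ has two different blow-up limits along two different scaling sequences, one of the pure quadratic type and one of the genuinely mixed type (or, more generally, that the family of blow-up limits is not a single well-defined function). In other words, the real content of the theorem beyond Proposition \ref{th1} is \emph{uniqueness} of the blow-up, for a single solution $u$ rather than a sequence.

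The mechanism I would use to pin down uniqueness is Proposition \ref{kee} applied to the constant sequence. If case (i) of Proposition \ref{th1} does not hold, then Proposition \ref{kee} case (ii) gives us, for each $R \ge 1$, a threshold $j_R$ so that (with the constant sequence) $u \in C^{2,\alpha}(B_{R r_j/4}^+)$ for $j$ large — i.e., $u$ is $C^{2,\alpha}$ in a fixed neighborhood of the origin (take any single $R$ and the corresponding $j$, noting $r_j \to 0$ is not what we want; rather the point is $R r_j /4$ can be taken to exhaust a fixed small ball as $R \to \infty$ with $j = j_R$, or more simply: once $\Gamma_i(\tilde u_j) \cap B_{R/2}^+ = \emptyset$ for the rescalings, the argument in the proof of Proposition \ref{kee} shows $F(D^2 u) = 1$ holds in an honest neighborhood $B_\rho^+$ of $0$ for some $\rho > 0$). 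But then $u$ solves the \emph{same} fully nonlinear equation $F(D^2 u) = 1$ with zero boundary data on $B_\rho'$, and up-to-the-boundary Evans--Krylov regularity shows $u \in C^{2,\alpha}(B_{\rho/2}^+)$. For such a $u$, the second-order Taylor expansion at $0$ with $u(0) = 0$, $\nabla u(0) = 0$, $u|_{x_n = 0} = 0$ forces $D^2 u(0)$ to have the block structure making the quadratic part exactly $\sum_i a_i x_i x_n + b x_n^2$; after a rotation in the $x'$-variables fixing $e_n$, this is $a x_1 x_n + b x_n^2$, and the blow-up at $0$ is precisely this quadratic, hence unique.

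The step I expect to be the main obstacle is the bookkeeping that converts the "$R$-dependent, $j$-dependent" $C^{2,\alpha}$ statement of Proposition \ref{kee}(ii) into a genuine fixed-neighborhood regularity statement for $u$ itself, and in particular checking that the constant sequence is a legitimate input to Proposition \ref{kee} so that the dichotomy there lines up with the dichotomy in Proposition \ref{th1}. Concretely: in case (ii) of Proposition \ref{kee} the rescalings $\tilde u_j(x) = u(r_j x)/r_j^2$ have no interior free boundary in $B_{R/2}^+$; undoing the scaling, $u$ has no interior free boundary in $B_{R r_j/2}^+$, and since $r_j$ is a fixed positive sequence (not forced to zero in a way that hurts — we just need one value), there is a fixed ball $B_\rho^+$ on which $\{u = 0\}$ has empty interior, whence by non-degeneracy $|B_\rho^+ \setminus \Omega| = 0$ and $F(D^2 u) = 1$ a.e. there. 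I would double-check the non-degeneracy input (the same one invoked in the proof of Proposition \ref{kee}) applies to a single solution, and that the origin is not isolated in $\Omega$ so that this neighborhood genuinely contains free-boundary-free solid pieces; granting this, Evans--Krylov up to the flat boundary closes the argument and the Taylor expansion gives the stated form with no density hypotheses, consistent with the introduction's claim that cusp singularities do not occur at the fixed boundary.
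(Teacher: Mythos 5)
Your proposal is correct and follows essentially the same route as the paper: apply Proposition \ref{kee} to the constant sequence so that either every blow-up is $bx_n^2$ (the case $a=0$) or $u$ is $C^{2,\alpha}$ in a fixed half-ball around the origin, whence $D^2u(0)$ exists and the (unique) blow-up is the second-order Taylor polynomial, which the conditions $u(0)=0$, $\nabla u(0)=0$, $u=0$ on $\{x_n=0\}$ force into the form $ax_1x_n+bx_n^2$ after a rotation in the $x'$-variables. Your extra bookkeeping (fixing one $R$ and one $j\ge j_R$ to obtain a genuine fixed neighborhood) is exactly what the paper leaves implicit in the phrase ``$D^2u(0)$ exists.''
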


\begin{proof}
By Proposition \ref{kee}, either $u_0(x)=bx_n^2$ or $D^2u(0)$ exists and the rescaling of $u$ is given by $$u_j(x)=\frac{u(r_j x)}{r_j^2}=\langle x, D^2u(0)x\rangle + o(1).$$ Since $u_0(x',0)=0$ for $x' \in \mathbb{R}^{n-1}$, it follows that $u_0$ has the claimed form (up to a rotation). 
\end{proof}

\begin{thm}\label{cula}
Suppose $\Omega=(\{u \neq 0\} \cup \{\nabla u \neq 0\}) \cap \{x_n>0\}$, $0 \in \overline{\Gamma}$, and $\{u_j\} \subset P_1^+(0,M, \Omega)$. Then the blow-up limit of $\{u_j\}$ at the origin has the form $$u_0(x)=bx_n^2$$ for $b>0$.
\end{thm}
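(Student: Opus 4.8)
The plan is to argue by contradiction, invoking Proposition \ref{kee} to reduce matters to ruling out a single non-flat alternative, and then using the homogeneous Dirichlet condition on $\{x_n=0\}$ to exclude it.

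First I would record two elementary consequences of the hypothesis $0\in\overline{\Gamma}$, valid for each $u_j$ and hence for every subsequence. One has $\nabla u_j(0)=0$: the tangential components of the gradient vanish since $u_j=0$ on $\{x_n=0\}$, while if $\partial_n u_j(0)\neq0$ then $\nabla u_j\neq0$ on some half-ball $B_\rho^+$, so $B_\rho^+\subset\{\nabla u_j\neq0\}\cap\{x_n>0\}\subset\Omega$ and consequently $\Gamma(u_j)\cap B_\rho^+=\emptyset$, contradicting $0\in\overline{\Gamma(u_j)}$. One also has $0\in\overline{\{u_j\neq0\}}$, because a point of $\partial\Omega$ cannot lie in the interior of $\{u_j=0\}$ (near such a point $\Omega$ would be empty). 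These are exactly the hypotheses under which Propositions \ref{th1} and \ref{kee} apply.

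Next, fix an arbitrary blow-up limit $u_0$ of $\{u_j\}$ at the origin, realized along a subsequence $u_{j_k}$ and scales $s_k\to0^+$, and apply Proposition \ref{kee} to $\{u_{j_k}\}$. Either every blow-up limit of $\{u_{j_k}\}$ --- $u_0$ in particular --- has the form $bx_n^2$ with $b>0$, which finishes the proof; or, by the construction in the proof of Proposition \ref{kee} (which passes through Proposition \ref{th1}), there are a further subsequence $\{v_j\}$ and scales $r_j\to0^+$ with $\tilde v_j(x):=v_j(r_jx)/r_j^2\to ax_1x_n+bx_n^2$ for some $a>0$ (after a rotation in the $x'$-variables), and with $F(D^2\tilde v_j)=1$ holding in the viscosity sense in $B_R^+$ for every $R\ge1$ once $j$ is large enough (depending on $R$). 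I claim this second alternative is impossible. Since $\tilde v_j$ solves the obstacle-free equation $F(D^2\tilde v_j)=1$ in $B_R^+$, vanishes on $\{x_n=0\}$, and satisfies a uniform $L^\infty$ bound on $B_R^+$ (from the scale-invariant $C^{1,1}$ bound for the class \cite{MR3513142} together with $\tilde v_j(0)=0$ and $\nabla\tilde v_j(0)=0$), the Evans--Krylov theorem up to the flat boundary \cite{Saf} upgrades $\tilde v_j\to ax_1x_n+bx_n^2$ to convergence in $C^2$ on compact subsets of $\{x_n\ge0\}$. In particular $\partial_{x_1x_n}\tilde v_j\to a$ uniformly on $\overline{B_2^+}$, so $\partial_{x_1x_n}\tilde v_j\ge a/2$ there for $j$ large; since $\partial_{x_1}\tilde v_j=0$ on $\{x_n=0\}$ (it is a tangential derivative of a function vanishing on the flat boundary), integrating in $x_n$ gives $\partial_{x_1}\tilde v_j(x',x_n)\ge\frac{a}{2}\,x_n>0$ throughout $B_2^+$. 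Hence $\nabla\tilde v_j\neq0$ on $B_2^+$, so $B_2^+\subset(\{\tilde v_j\neq0\}\cup\{\nabla\tilde v_j\neq0\})\cap\{x_n>0\}$ and therefore $\Gamma(\tilde v_j)\cap B_2^+=\emptyset$; rescaling back, $\Gamma(v_j)\cap B_{2r_j}^+=\emptyset$, i.e. $\dist(0,\Gamma(v_j))\ge 2r_j>0$, contradicting $0\in\overline{\Gamma(v_j)}$. Thus $u_0$ must be flat, and $b>0$ by Proposition \ref{kee} (equivalently, $b\le0$ is excluded by the quadratic non-degeneracy at free-boundary points accumulating at $0$, since $F(M)\le0$ when $M\le0$).

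The main obstacle is the exclusion of the non-flat alternative, and inside it the point that the convergence $\tilde v_j\to ax_1x_n+bx_n^2$ can be taken in $C^2$ up to $\{x_n=0\}$, which is what makes the fundamental-theorem-of-calculus step legitimate. This in turn relies on the obstacle being inactive in the half-balls $B_R^+$ --- precisely the non-degeneracy and empty-interior argument embedded in the proof of Proposition \ref{kee} --- followed by the up-to-the-boundary Evans--Krylov estimates for the flat Dirichlet problem. The two preliminary observations and the final contradiction with $0\in\overline{\Gamma}$ are routine by comparison.
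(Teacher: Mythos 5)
Your proof is correct, and its skeleton coincides with the paper's: both reduce to the dichotomy of Proposition \ref{kee} and then derive a contradiction with $0\in\overline{\Gamma}$ in the non-flat alternative. The difference lies in how that contradiction is extracted. The paper keeps the free boundary points (which must exist in $B_{Rr_j/4}^+$ because $0\in\overline{\Gamma(u_{k_j})}$) and argues that $C^{2,\alpha}$ regularity forces $F(D^2u_{k_j})$ to be continuous across them, which is incompatible with the right-hand side jumping between $1$ and $0$; this is a soft argument, but it leans on interpreting the equation ``in the complement'' of $\Omega_j$, a set which at that stage is only known to have empty interior and measure zero. You instead upgrade the convergence $\tilde v_j\to ax_1x_n+bx_n^2$ to $C^2$ up to $\{x_n=0\}$ via the boundary Evans--Krylov estimates plus Arzel\`a--Ascoli, deduce the quantitative bound $\partial_{x_1}\tilde v_j(x)\ge \tfrac{a}{2}x_n>0$ in $B_2^+$ by integrating $\partial_{x_1x_n}\tilde v_j\ge a/2$ from the flat boundary, and conclude that $B_2^+\subset\Omega_j$, so the free boundary is simply absent at scale $r_j$ --- directly contradicting $0\in\overline{\Gamma(v_j)}$. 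This buys you a cleaner contradiction that never has to discuss what the equation says on the (possibly nowhere dense) complement, at the modest cost of invoking the uniform $C^{2,\alpha}$ estimate rather than just the qualitative $C^{2,\alpha}$ membership. Your preliminary verifications that $0\in\overline{\Gamma}$ implies $\nabla u_j(0)=0$ and $0\in\overline{\{u_j\neq0\}}$, which the paper leaves implicit when invoking Proposition \ref{kee}, are welcome and correct.
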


\begin{proof}
By Proposition \ref{kee}, either $u_0(x)=bx_n^2$ or there exists a subsequence $$u_{k_j}(x)\in C^{2,\alpha}(B_{\frac{Rr_{j}}{4}}^+)$$ which contradicts that $F$ is continuous (consider a sequence of points approaching the free boundary from the set where the equation is satisfied with the right-hand-side being equal to one and from the complement). 
\end{proof}

\begin{remark}
There exist global solutions which are not blow-up solutions (at contact points).
\end{remark}

\begin{proof}[proof of Theorem \ref{tt}]
It suffices to show that for any $\epsilon>0$ there exists $\rho_\epsilon>0$ such that $\Gamma(u) \cap B_{\rho_\epsilon}^+ \subset B_{\rho_\epsilon}^+ \setminus \mathcal{C}_\epsilon$, where $\mathcal{C}_\epsilon=\{x_n > \epsilon |x'| \}$. If not, then there exists $\epsilon>0$ such that for all $k \in \mathbb{N}$ there exists $u_k \in P_1^+(0,M, \Omega)$ with 
\begin{equation} \label{cont}
\Gamma(u_k) \cap B_{1/k}^+ \cap \mathcal{C}_\epsilon \neq \emptyset, 
\end{equation}
where $0 \in \overline{\Gamma(u_k)}.$ If all blow-ups of $\{u_k\}$ are half-space solutions. Let $x_k \in \Gamma(u_k) \cap B_{1/k}^+ \cap \mathcal{C}_\epsilon$ and set $y_k=\frac{x_k}{r_k}$ with $r_k=|x_k|$. Consider $\tilde u_k(x)=\frac{u_k(r_kx)}{r_k^2}$ so that $y_k \in \Gamma(\tilde u_k)$, $\tilde u_k \rightarrow bx_n^2$, $y_{k} \rightarrow y \in \partial B_1 \cap C_\epsilon$ (up to a subsequence), and $y \in \Gamma(u_0)$, a contradiction. Second, select a subsequence $\{u_{k_j}\}$ of $\{u_k\}$ such that for all $j \ge j_2$, $u_{k_j} \in C^{2,\alpha}(B_{\frac{r_{j}}{2}}^+)$, where $j_2 \in \mathbb{N}$ and the sequence $\{r_{j}\}$ depends on $\{u_k\}$. Since $0 \in \overline{\Gamma(u_{k_j})},$ there exists $$x_j \in \Gamma(u_{k_j}) \cap B_{\frac{r_{j}}{2}}^+$$ which contradicts the continuity of $F$ (consider a sequence of points approaching the free boundary from the set where the equation is satisfied with the right-hand-side being equal to one and from the complement).    
\end{proof}

\subsection{An obstacle problem in superconductivity} 
Equations of the type
$$
F(D^2u,x)=g(x,u)\chi_{\{\nabla u \neq 0\}}
$$
have been investigated in \cite{MR1897393} and are based on physical models, e.g. the stationary equation for the mean-field theory of superconducting vortices when the scalar stream is a function of the scalar magnetic potential \cite{MR1349309, MR1388106, MR1664550}. It is shown that in certain configurations in two dimensions, the set $\{\nabla u=0\}$ is convex. 
In a recent paper \cite{I2}, the author proved non-transversal intersection for $\Omega = \{\nabla u \neq 0\} \cap \{x_2>0\}$.
If $\{u<0\}$ has sufficiently small density, non-transversal intersection follows from the techniques discussed above without a dimension restriction: suppose
$$	 
\frac{|\{u<0\}\cap B_r^+|}{|B_r^+|} \rightarrow 0
$$
as $r\rightarrow 0^+$.
A limit of the form $$u_0(x)=\lim_{k \rightarrow \infty} \frac{u_{j_k}(s_kx)}{s_k^2}$$ satisfies $u_0 \ge 0$ and therefore cannot be $ax_1x_n+bx_n^2$ for $a \neq 0$. In particular, it must be a half-space solution by Proposition \ref{th1} and the non-transversal intersection follows as before. The assumption on the negativity set appeared in \cite{MR2065018} where the authors considered the non-transversal intersection subject to additional assumptions on the operator and solution.

\section{$C^1$ regularity} \label{S3}

In the physical case when $u \ge 0$, the free boundary is $C^1$ without density assumptions. 

\begin{thm} \label{c1r}
Let $u \in P_1^+(0, M, \Omega)$ be non-negative, $\Omega=(\{u \neq 0\} \cup \{\nabla u\neq 0\}) \cap \{x_n>0\}$, and $0 \in \overline{\Gamma(u)}$. There exists $r_0>0$ such that $\Gamma$ is the graph of a $C^1$ function in $B_{r_0}^+$.
\end{thm}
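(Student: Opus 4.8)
The plan is to run a compactness/blow-up argument in the spirit of the proof of Theorem \ref{tt}, upgrading the non-transversal cone estimate to a genuine $C^1$ graph representation. By Theorem \ref{cula}, every blow-up limit of $u$ (indeed of any sequence in the class) at a contact point $0 \in \overline{\Gamma}$ is the unique half-space solution $u_0(x) = b x_n^2$ with $b > 0$; note $b$ is pinned down by $F(2b\, e_n \otimes e_n) = 1$, so the blow-up is \emph{unique} and independent of the subsequence and of the scaling. This uniqueness is the structural input that makes a $C^1$ (rather than merely Lipschitz) conclusion plausible.

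First I would show that the cone flatness of Theorem \ref{tt} holds not just at the origin but, uniformly, at every free boundary point near the origin, with the \emph{same} modulus $\omega$: this follows because the class $P_1^+(0,M,\Omega)$ is closed under the relevant rescalings and translations along $B_1'$, and $0 \in \overline{\Gamma}$ is the only hypothesis used. Thus for $r_0$ small, $\Gamma \cap B_{r_0}^+$ lies in the region $\{x_n \le \omega(|x'|)|x'|\}$ and, after translating to any $z \in \Gamma \cap B_{r_0/2}^+$, the free boundary near $z$ lies in a two-sided narrow cone around the $\{x_n = z_n\}$-type slab — which already gives that $\Gamma$ is a Lipschitz graph $x_n = g(x')$ with small Lipschitz constant on $B_{r_0}'$ (two free boundary points cannot have the same $x'$, by the cone trapping, and the graph property propagates). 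The key point is that the opening of the cone can be taken arbitrarily small by shrinking the scale, because the blow-up is always the flat solution $bx_n^2$; a compactness argument (if not, extract a sequence of free boundary points and rescale to contradict Theorem \ref{cula}) converts this into: for every $\eps > 0$ there is $\rho_\eps$ so that on scale $\rho_\eps$ the free boundary is trapped in an $\eps$-flat cone.

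Next, to get $C^1$ I would prove that the Lipschitz constant of $g$ at scale $r$ around any point of $\Gamma \cap B_{r_0/2}^+$ tends to $0$ as $r \to 0^+$, uniformly. Concretely, define $\theta(z,r)$ to be the smallest $\eps$ such that $\Gamma \cap B_r(z)$ is contained in the $\eps$-cone $\{|x_n - z_n| \le \eps |x' - z'|\}$; the previous paragraph gives $\sup_z \theta(z,r) \to 0$ as $r \to 0$. Standard real-analysis then yields that $g$ is differentiable at every $z' \in B_{r_0}'$ with $\nabla g(z') = 0$ in the rotated frame — equivalently, the unit normal $\nu(z)$ to $\Gamma$ at $z$ exists and varies continuously, because $|\nu(z_1) - \nu(z_2)|$ is controlled by $\theta(z_1, C|z_1 - z_2|) + \theta(z_2, C|z_1-z_2|) \to 0$. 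Hence $g \in C^1(B_{r_0}')$. One technical wrinkle is that the "rotated frame" at different free boundary points need not be the same; but the cone-flatness forces all these frames to be within $\omega(r_0)$ of the fixed $e_n$ direction, so they can all be expressed as graphs over a common $B_{r_0}'$ after choosing $r_0$ small.

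The main obstacle I anticipate is the uniformity of the flatness decay over all nearby free boundary points simultaneously, i.e. justifying $\sup_{z \in \Gamma \cap B_{r_0/2}^+} \theta(z, r) \to 0$ rather than just $\theta(0,r) \to 0$. This requires knowing that a blow-up taken along a sequence of \emph{varying base points} $z_j \to 0$ together with scales $r_j \to 0$ is still a half-space solution; this is exactly the sequential statement of Theorem \ref{cula} (it is phrased for sequences $\{u_j\}$ precisely to accommodate this), combined with the fact that the translated-rescaled functions $(u(z_j + r_j \cdot) )/r_j^2$ remain in (a fixed dilate of) the class $P_1^+$ and keep $z_j$-images on the free boundary. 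A secondary point needing care is that $u \ge 0$ is used to rule out the $ax_1x_n + bx_n^2$ alternative — but here Theorem \ref{cula} already did that, so nonnegativity mainly guarantees we are genuinely in the half-space regime and that non-degeneracy (used implicitly to keep free boundary points from disappearing in the limit) applies. Assembling these, the cone-monotonicity and the vanishing flatness give the $C^1$ graph, completing the proof.
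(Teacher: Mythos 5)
Your overall strategy (blow-up classification plus vanishing flatness) is reasonable in outline, but there is a genuine gap at the step where you pass from the contact point $0$ to arbitrary free boundary points $z \in \Gamma \cap B_{r_0}^+$. Theorems \ref{tt} and \ref{cula} classify blow-ups centered at points of $\overline{\Gamma} \cap \{x_n=0\}$; they require $0 \in \overline{\Gamma(u)}$ after translation. A free boundary point $z$ with $z_n>0$ is an interior point, its projection $(z',0)$ need not lie in $\overline{\Gamma}$, and at scales $r \ll z_n$ a blow-up at $z$ does not see the fixed boundary at all, so Theorem \ref{cula} gives no information there. Consequently your claims that the cone flatness of Theorem \ref{tt} holds at every nearby free boundary point with the same modulus, that $\Gamma$ is trapped in a two-sided cone at each such $z$, and that $\sup_z \theta(z,r)\to 0$, are not justified by the results you cite. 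The paper closes exactly this gap with a separate compactness lemma: for every $\epsilon>0$ there is $r(\epsilon,M)$ such that at each $x^0 \in \Gamma \cap B_{1/2}^+$ with $d = x_n^0 < r$ one has $\sup_{B_{2d}^+(x^0)}|u - b[(x_n-d)^+]^2| \le \epsilon d^2$ together with the matching gradient bound. Proving this requires classifying limits of the rescalings $u_j((x^j)'+d_j\,\cdot)/d_j^2$, centered at the projections of interior free boundary points rather than at contact points; the possible bad limits there are of the form $ax_1x_n + cx_n + \tilde b x_n^2$ with $a \neq 0$, and it is precisely the hypothesis $u \ge 0$ that excludes them. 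So your remark that nonnegativity is not really needed because ``Theorem \ref{cula} already did that'' is mistaken: Theorem \ref{cula} handles contact points for the general class, while $u\ge 0$ is what makes the classification work at the off-contact scale $d$.

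A secondary issue: your final step derives $C^1$ directly from uniform flatness decay $\sup_z \theta(z,r)\to 0$ as $r\to 0$. For $r$ below $z_n$ this decay is an interior statement and cannot come from the boundary blow-up theorems; the paper instead stops at the Lipschitz cone condition and invokes the interior result \cite[Theorem 1.3]{MR3198649} to upgrade Lipschitz to $C^1$, then uses the non-transversality and the approximation estimate to push the cone aperture toward $\pi$. You would either have to import that interior theorem as well or reprove the interior flatness improvement, neither of which your outline addresses.
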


\begin{proof} 
First, for any $\epsilon>0$ there exists $r(\epsilon, M)>0$ such that if $x^0 \in \Gamma(u) \cap B_{1/2}^+$ and $d=x_n^0<r,$ then 
$$ 
\sup_{B_{2d}^+(x^0)} |u-h| \le \epsilon d^2, \hskip .2in \sup_{B_{2d}^+(x^0)} | \nabla u - \nabla h| \le \epsilon d, 
$$
where $$h(x)= b[(x_n-d)^+]^2,$$ and $b>0$ depends on the ellipticity constants of $F$. If not, then there exists $\epsilon>0$, non-negative $u_j \in P_1^+(0,M, \Omega)$, and $x^j \in \Gamma(u_j) \cap B_{1/2}^+$ with $d_j=x_n^j \rightarrow 0$, for which 
$$\sup_{B_{2d_j}(x^j)^+} |u_j- b[(x_n-d_j)^+]^2|>\epsilon d_j^2,$$ or 
$$\sup_{B_{2d_j}(x^j)^+} |\nabla u_j- 2b(x_n-d_j)^+|>\epsilon d_j.$$   
Let $\tilde u_j(x)=\frac{u_j((x^j)'+d_jx)}{d_j^2}$ so that in particular 
$$||\tilde u_j - h||_{C^1(B_2^+(e_n))} \ge \epsilon,$$ where $h(x)=b[(x_n-1)^+]^2$. Since $\tilde u_j(e_n)=|\nabla \tilde u_j(e_n)|=0,$ the $C^{1,1}$ regularity of $\tilde u_j$ implies that $|\tilde u_j(x)| \le C|x-e_n|^2$. By passing to a subsequence, if necessary, $$\tilde u_j \rightarrow u_0$$ where $u_0 \in C^{1,1}(\mathbb{R}_+^n)$ satisfies the following PDE in the viscosity sense
\begin{equation} \label{m3}
\begin{cases}
F(D^{2}u_0)=1 & \text{a.e. in }\mathbb{R}_+^n\cap\Omega_0,\\
|\nabla u_0|=0=u_0 & \text{in }\mathbb{R}_+^n\backslash\Omega_0,\\
u_0=0 & \text{on }\mathbb{R}_+^{n-1}.
\end{cases}
\end{equation}   
Now let $$N=\limsup_{|x|\rightarrow 0, x_n>0} \frac{1}{x_n} \sup_{u\in P_1^+\cap \{u \ge 0\}} \sup_{e \in \mathbb{S}^{n-2} \cap e_n^{\perp}} \sup_{y \in \overline{B_{1/2}^+} \cap \{x_n=0\}} \partial_e u(x+y)$$ and note that $N<\infty$ by $C^{1,1}$ regularity and the boundary condition: for any 
$e \in \mathbb{S}^{n-2} \cap e_n^{\perp}$ and $y \in \overline{B_{1/2}^+} \cap \{x_n=0\}$, it follows that $\partial_{e} u(x'+y)=0$. Furthermore, 

\begin{equation} \label{part}
N \ge \lim_j \bigg| \frac{\partial_{x_i}  u_j(d_j x+(x^j)')}{d_j x_n}\bigg| = \lim_j \bigg| \frac{\partial_{x_i} \tilde u_j(x)}{x_n} \bigg| =\bigg|\frac{\partial_{x_i} u_0(x)}{x_n}\bigg|
\end{equation}
for all $i \in \{1,\ldots, n-1\}$. In particular, let $v= \partial_{x_1} u_0$ so that in $\mathbb{R}_+^n$, 
\begin{equation} \label{ine}
|v(x)| \le Nx_n. 
\end{equation}
If $N=0$, then $\partial_{x_i} u_0=0$ for all $i \in \{1,\ldots,n-1\}$ and therefore $u_0(x)=u_0(x_n)$. Since $e_n$ is a free boundary point, it follows that $u_0=h$, a contradiction. Thus $N>0$  
and there is a sequence 
$\{x^k\}_{k \in \mathbb{N}}$ with $x_n^k>0$, $u_k \in P_1^+(0,M, \Omega)$, $u_k \ge 0$, $y^k \in \overline{B_{1/2}^+} \cap \{x_n=0\}$, and $e^k \in \mathbb{S}^{n-2} \cap e_n^{\perp}$ such that $$N=\lim_{k \rightarrow \infty} \frac{1}{x_n^k}  \partial_{e^{k}} u_k(x^k+y^k).$$ 
By compactness, $e^k \rightarrow e_1 \in \mathbb{S}^{n-2}$ (along a subsequence) so that up to a rotation, 
$$N= \lim_{k \rightarrow \infty} \frac{1}{x_n^k}  \partial_{x_1} u_k(x^k+y^k).$$ Let 
$$\tilde u_k(x)=\frac{u_k(y^k+r_kx)}{r_k^2},$$ where $r_k=|x^k|$, $z^k=\frac{x^k}{r_k}$, and note that along a subsequence $z^k \rightarrow z \in \mathbb{S}^{n-1}$ and $\tilde u_k \rightarrow u_0$. It follows that $\partial_{x_1}u_0(z)=Nz_n$ and proceeding as in \cite{MR3513142} one deduces that $u_0(x)=ax_1x_n+cx_n+\tilde b x_n^2$ for $a \neq 0$ and $c, \tilde b \in \mathbb{R}$, contradicting that $u \ge 0$. This implies that in a neighborhood of the origin, there is a cone of fixed opening that can be placed below and above each free boundary point; therefore, the free boundary is Lipschitz continuous and thus $C^1$ by interior results \cite[Theorem 1.3]{MR3198649}. Since the intersection of $\Gamma$ and the origin occurs non-transversally, and 
$$ 
\sup_{B_{2d}^+(x^0)} |u-h| \le \epsilon d^2, \hskip .2in \sup_{B_{2d}^+(x^0)} | \nabla u - \nabla h| \le \epsilon d, 
$$
the aperture of the cones can be taken arbitrarily close to $\pi$.     
\end{proof}

{\bf Acknowledgement} The author wishes to thank Donatella Danielli and Irina Mitrea for organizing the AMS Special Session ``Harmonic Analysis and Partial Differential Equations" at Northeastern University.

\section{Appendix} \label{S4}

\begin{lem} \label{m}
Let $u \in P_r^+(0,M, \Omega)$ where $\{u \neq 0\} \subset \Omega$, $e \in \mathbb{S}^{n-2} \cap e_n^{\perp}$, and suppose there exist non-negative constants $\epsilon_0, C_0$ such that $C_0 \partial_e u -u \ge -\epsilon_0$ in $B_r^+$. Then there exists $c=c(n, \Lambda, r)>0$ such that if $\epsilon_0 \le c$, then $C_0 \partial_e u - u \ge 0$ in $B_{\frac{r}{2}}^+$. 
\end{lem}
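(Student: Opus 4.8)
The plan is to show that the function $w := C_0 \partial_e u - u$, which by hypothesis satisfies $w \ge -\epsilon_0$ in $B_r^+$, actually satisfies $w \ge 0$ in the smaller half-ball once $\epsilon_0$ is small. The idea is to combine a PDE bound for $w$ (coming from differentiating the equation $F(D^2u)=\chi_\Omega$ in the tangential direction $e$ and using that $u$ itself is an admissible sub/supersolution) with the vanishing of $w$ on the flat boundary $\{x_n=0\}$, and then run a barrier argument.

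First I would record that since $e \in \mathbb S^{n-2}\cap e_n^\perp$ is a tangential direction and $u=0$ on $B'_1$, both $u$ and $\partial_e u$ vanish on $\{x_n=0\}\cap \overline{B_r^+}$; hence $w=0$ on $B'_{r}$. Next, on the open set $\{u\neq 0\}\subset \Omega$ the equation reads $F(D^2u)=1$, and on the interior of $\{u=0\}$ we have $D^2u = 0$ so $F(D^2u)=F(0)=0$; in either case $\chi_\Omega$ is locally constant there, so differentiating gives that $v:=\partial_e u$ satisfies a linear uniformly elliptic equation $a_{ij}(x)\partial_{ij} v = 0$ with ellipticity constants $\lambda_0,\lambda_1$, at least in the viscosity sense away from the free boundary; and across the free boundary the $C^{1,1}$ regularity of the class (from \cite{MR3513142}) lets one treat $v$ as a $W^{2,p}$ function. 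Meanwhile $u$ satisfies $\lambda$-elliptic inequalities $\mathcal P^-(D^2u)\le \chi_\Omega \le \mathcal P^+(D^2u)$, i.e. $u$ is a viscosity solution of $\mathcal P^-(D^2u)\le 1$. Combining, $w = C_0 v - u$ is a viscosity supersolution of a Pucci-type extremal inequality: $\mathcal P^-(D^2 w) \le C_0 \cdot 0 + \mathcal P^-(-D^2 u) \le 1$ (or, more carefully, $\mathcal P^+(D^2 w)\ge -1$), so in any case $w$ satisfies a one-sided bound $\mathcal P^-(D^2 w)\le C_1$ in $B_r^+$ for a constant $C_1 = C_1(\Lambda)$ — the precise inequality is the routine part.

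Then I would run the barrier. Let $\phi$ be the solution (or an explicit supersolution) of $\mathcal P^-(D^2\phi) = C_1$ in $B_r^+$ with $\phi = 0$ on the flat part $B'_r$ and $\phi = $ (a suitable positive constant $K$) on the spherical part $\partial B_r \cap \{x_n>0\}$; one can take $\phi$ explicitly of the form $\phi(x) = A x_n(|x|^2 - r^2)$ or a combination of a paraboloid vanishing on $\{x_n=0\}$ and a term handling the curved boundary, chosen so that $\mathcal P^-(D^2\phi)\ge C_1$ and $\phi \ge 0$ on $\partial B_r^+$, with $\phi \le \tfrac{c_0}{\epsilon_0}$-independent bound — concretely, pick $\phi$ with $\phi \le \tfrac14 r^2$-type size and $\phi \ge \delta_0 x_n$ near $B'_{r/2}$. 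Now consider $\psi := w + \epsilon_0 \, \phi / c$ for an appropriate normalization: on $\partial B_r^+$ we have $w\ge -\epsilon_0$ and $\epsilon_0\phi/c \ge \epsilon_0$ where needed (on the curved part), while on $B'_r$ both terms vanish; and $\mathcal P^-(D^2\psi) \le C_1 - (\epsilon_0/c)C_1 \le 0$ is not quite what I want, so instead I subtract: the cleaner route is to compare $w$ with $-\epsilon_0 \eta$ where $\eta$ is a subsolution ($\mathcal P^+(D^2\eta)\ge 0$, $0\le \eta\le 1$, $\eta = 1$ on $\partial B_r\cap\{x_n>0\}$, $\eta=0$ on $B'_r$) built as a harmonic-type function, giving $w\ge -\epsilon_0\eta$ by the comparison principle for Pucci operators (here I use that $w$ has the extremal inequality from Step 2 with the right sign, and $\epsilon_0$ times the linear correction absorbs the inhomogeneity $C_1$ by shrinking: rescale so the inhomogeneous term contributes $O(\epsilon_0)$). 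Since $\eta(x)\to 0$ as $x\to B'_{r/2}$ at a linear rate $\eta(x)\le C x_n$, and since the hypothesis ultimately came with room to spare, one gets $w(x) \ge -C\epsilon_0 x_n$ in $B_{r/2}^+$; but this alone is not yet $w\ge 0$.

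To finish I would bootstrap using the structure $w = C_0\partial_e u - u$ once more: the bound $w\ge -C\epsilon_0 x_n$ together with $w=0$ on $\{x_n=0\}$ and $\partial_n w$ controlled by $C^{1,1}$ means the "$-C\epsilon_0 x_n$" loss is of lower order, and iterating the barrier comparison on a sequence of shrinking half-balls (or, equivalently, applying the boundary Harnack / Hopf principle to the nonnegative-up-to-$C\epsilon_0 x_n$ function) upgrades the estimate: choosing $c = c(n,\Lambda,r)$ small enough that $C\epsilon_0 < $ the gain at each step, the negative part is driven to $0$ on $B_{r/2}^+$. I expect the \textbf{main obstacle} to be making the PDE for $w$ rigorous across the free boundary — the set $\{u=0\}\setminus \Omega^\circ$ where the equation switches right-hand side is where differentiation in $e$ is delicate — but this is handled exactly as in \cite{MR3513142, MR3513142} via the $C^{1,1}$ regularity and $W^{2,p}$ theory, treating $\partial_e u$ as a strong solution of a linear elliptic equation a.e. and noting $\chi_\Omega$ contributes nothing to the derivative a.e.; the barrier construction itself is then standard and the smallness threshold $c$ is read off explicitly from the barrier.
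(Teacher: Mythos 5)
Your overall instinct (maximum principle with a barrier, using $w=0$ on the flat boundary and $w\ge-\epsilon_0$ elsewhere) is in the right family, but the proposal has two genuine gaps, one of which you flag yourself. First, the differential inequality you derive for $w=C_0\partial_e u-u$ has the wrong sign to be useful. The engine of the paper's proof is that convexity of $F$ produces a \emph{single} measurable uniformly elliptic operator $L=a_{ij}\partial_{ij}$ (the linearization of $F$ at $D^2u(x)$) satisfying simultaneously $L(\partial_e u)\le 0$ and $Lu\ge F(D^2u)-F(0)=1$ in $\Omega$, whence $Lw\le -1$: a \emph{strictly negative} right-hand side. Your bookkeeping with extremal operators cannot reproduce this: from $\mathcal{P}^-(D^2u)\le F(D^2u)=1$ one only gets $\mathcal{P}^+(-D^2u)\ge -1$, which is the wrong direction, and your stated conclusion that $\mathcal{P}^-(D^2w)\le C_1$ for some $C_1=C_1(\Lambda)$ is vacuous unless $C_1<0$. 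This is precisely the step you dismiss as ``the routine part.''

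Second, because you never have the $-1$, your barrier only yields $w\ge -C\epsilon_0 x_n$, and the proposed bootstrap has no contraction mechanism: iterating the same comparison on shrinking half-balls returns the same estimate with the same $\epsilon_0$, so the negative part is never driven to zero. The paper closes the argument in one step: assuming $w(y)<0$ for some $y\in B_{r/2}^+$, it considers $w+\frac{|x-y|^2}{2n\Lambda}$, which is still an $L$-supersolution (the $Lw\le-1$ absorbs $L\bigl(\frac{|x-y|^2}{2n\Lambda}\bigr)\le 1$), takes a negative value at $y$, is $\ge0$ on $\partial\Omega$ and on $\{x_n=0\}$, and is $\ge-\epsilon_0+\frac{r^2}{8n\Lambda}$ on $\partial B_r$; the minimum principle on $\Omega\cap B_r^+$ then gives a contradiction once $\epsilon_0<\frac{r^2}{8n\Lambda}$. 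Note also that the comparison is run on $\Omega\cap B_r^+$, not on all of $B_r^+$, which dissolves what you call the ``main obstacle'' of differentiating the equation across the free boundary: one only needs $w\ge0$ on $\partial\Omega$, which holds because $u$ and $\nabla u$ vanish there.
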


\begin{proof}
By convexity of $F$, there exist measurable uniformly elliptic coefficients $a_{ij}$ such that 
$$ 
F(D^2 u(x+he))-F(D^2 u(x)) \ge a_{ij}(\partial_{ij} u(x+he)-\partial_{ij} u(x)) 
$$
if $x \in \Omega$ provided $h$ is small enough. Therefore, 

$$0 \ge a_{ij} \partial_{ij} \partial_e u \hskip .1in \text{in $\Omega$}.$$ Convexity also yields 

$$a_{ij} \partial_{ij} u \ge F(D^2 u(x))-F(0) = 1 \hskip .1in \text{in $\Omega$}.$$ Suppose now that there exists $y \in B_{\frac{r}{2}}^+$ for which  $C_0 \partial_e u(y) - u(y) < 0.$ Let $w(x)=C_0 \partial_e u(x)-u(x)+\frac{|x-y|^2}{2n\Lambda}$. Since $\lambda Id \le (a_{ij}) \le \Lambda Id$, it follows by the above that $L w \le 0$ in $\Omega$ where $L=a_{ij}\partial_{ij}$. The maximum principle implies $\min_{\partial(\Omega \cap B_r^+)} w=\min_{\Omega \cap B_r^+} w < 0$. Note that $w\ge 0$ on $\partial \Omega$ and likewise on $\{x_n=0\}$. Therefore, the minimum occurs on $\partial B_r$ and thus $0 > -\epsilon_0+\frac{1}{8n \Lambda}r^2$, a contradiction if $\epsilon_0$ is small enough.  
\end{proof}

\begin{rem} \label{rem1}
One may take $\epsilon_0=c r^2$, where $c>0$ depends only on the dimension and ellipticity constants of $F$.  
\end{rem}

\begin{rem}
If $u \ge 0$, then $\partial_{e_n} u \ge 0$ on $\{x_n=0\} \cap B_r$ and Lemma \ref{m} holds therefore in this case for all $e \in \mathbb{S}^{n-1}$ such that $e \cdot e_n \ge 0$. 
\end{rem}

\begin{lem} \label{c}
Let $u_0(x)=ax_1x_n+bx_n^2$ with $a \neq 0$ and $R\ge1$. Then there exists $c=c(a,b)>0$ such that $$\inf_D |\nabla u_0(x)| \ge c,$$ where $D=\{x=(x_1,x'',x_n): R > |x| > R/2, |x''| \le \delta(R)\}$ for some $\delta(R)>0$.   
\end{lem}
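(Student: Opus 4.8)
The plan is to compute $\nabla u_0$ explicitly and reduce the claim to a lower bound for a positive definite quadratic form on a region that is bounded away from the origin. Writing $x=(x_1,x'',x_n)$ with $x''\in\mathbb{R}^{n-2}$, one has $\partial_{x_1}u_0 = a x_n$, $\partial_{x''}u_0 = 0$, and $\partial_{x_n}u_0 = a x_1 + 2b x_n$, so that
\[
|\nabla u_0(x)|^2 = a^2 x_n^2 + (a x_1 + 2b x_n)^2 =: Q(x_1,x_n),
\]
a quadratic form depending only on the two variables $x_1$ and $x_n$.

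First I would observe that $Q$ is \emph{positive definite} precisely because $a\neq 0$: indeed $Q(x_1,x_n)=0$ forces $x_n=0$ and $a x_1=0$, hence $x_1=x_n=0$. Equivalently, the symmetric matrix of $Q$ is $\begin{pmatrix} a^2 & 2ab \\ 2ab & a^2+4b^2\end{pmatrix}$, whose determinant equals $a^4>0$ and whose trace $2a^2+4b^2$ is positive, so both eigenvalues are strictly positive. Let $\mu=\mu(a,b)>0$ be the smallest eigenvalue, so that $Q(x_1,x_n)\ge \mu(x_1^2+x_n^2)$ for all $(x_1,x_n)\in\mathbb{R}^2$.

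Next I would choose the width of the cylinder so that the annular constraint $R/2<|x|<R$ keeps $(x_1,x_n)$ away from the origin. Take $\delta(R):=R/4$. Then for $x\in D$ we have $x_1^2+x_n^2 = |x|^2-|x''|^2 > (R/2)^2-\delta(R)^2 = \tfrac{3}{16}R^2 \ge \tfrac{3}{16}$, using $R\ge 1$. Combining with the previous step, $|\nabla u_0(x)|^2 = Q(x_1,x_n) \ge \mu(x_1^2+x_n^2) > \tfrac{3}{16}\mu$ throughout $D$, so the lemma holds with $c=c(a,b):=\tfrac14\sqrt{3\mu(a,b)}$.

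There is no serious obstacle here; the argument is a direct computation. The only point requiring a little care is the interplay between the two defining conditions of $D$: one must pick $\delta(R)$ strictly smaller than $R/2$, so that thinness of the cylinder in the $x''$ directions together with $|x|>R/2$ forces $x_1^2+x_n^2$ to stay uniformly positive; any such choice works, and the particular value $R/4$ is immaterial. Note also that the bound never uses $x_n\ge 0$, since $Q(x_1,x_n)\ge\mu(x_1^2+x_n^2)$ holds regardless of the signs of the coordinates.
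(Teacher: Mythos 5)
Your proof is correct, and it takes a cleaner route than the paper's. You recognize that $|\nabla u_0|^2=a^2x_n^2+(ax_1+2bx_n)^2$ is a positive definite quadratic form in $(x_1,x_n)$ alone, with matrix $\left(\begin{smallmatrix} a^2 & 2ab\\ 2ab & a^2+4b^2\end{smallmatrix}\right)$ of determinant $a^4>0$, and bound it below by $\mu(a,b)(x_1^2+x_n^2)$; then the choice $\delta(R)=R/4$ together with $|x|>R/2$ and $R\ge 1$ gives $x_1^2+x_n^2>3/16$ uniformly. The paper instead argues by cases: if $|x_n|>1/3$ it uses the square term $a^2x_n^2$ directly, and if $|x_n|\le 1/3$ it forces $x_1^2>\tfrac{5}{72}R^2$ via the thinness of the cylinder and then absorbs the cross term $4abx_1x_n$ by a weighted Young inequality with a parameter $\epsilon^2\in(\tfrac{1}{a^2+4b^2},\tfrac{1}{b^2})$, which requires treating $b\ne 0$ separately from $b=0$. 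Your eigenvalue argument avoids both the case split on $|x_n|$ and the separate treatment of $b=0$, and it makes transparent why only $a\ne 0$ is needed (the determinant is $a^4$); what the paper's version buys is fully explicit elementary constants, e.g.\ the specific admissible width $\delta(R)=\sqrt{5/72}\,R$, at the cost of a slightly more delicate bookkeeping. Both yield a constant $c$ depending only on $a$ and $b$, as the statement requires.
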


\begin{proof}
Note $|\nabla u_0(x)|^2=a^2x_n^2+a^2x_1^2+2abx_1x_n+4b^2x_n^2$ so that if $|x_n|>\frac{1}{3}$, then $|\nabla u_0(x)|^2 \ge \frac{a^2}{9}$. If $|x_n| \le \frac{1}{3}$, then for points that satisfy $|x''| \le \sqrt{\frac{5}{72}R}$, where $x''=(x_2,x_3,\ldots,x_{n-1})$, it follows that 
$$
x_1^2 > \frac{5}{72} R^2.
$$
If $b \neq 0$, let $\epsilon^2 \in (\frac{1}{a^2+4b^2}, \frac{1}{b^2}).$ Then 
\begin{align*}
|\nabla u_0(x)|^2&\ge(a^2+4b^2-\frac{1}{\epsilon^2})x_n^2+(a^2-\epsilon^2a^2b^2)x_1^2\\
&> (a^2-\epsilon^2a^2b^2)(\frac{5}{72} R^2).
\end{align*} 
\end{proof}

\begin{lem} \label{d}
Let $u_0(x)=ax_1x_n+bx_n^2$ with $a > 0$ and $R \ge 1$. Then there exists $C_0=C_0(a,b,R)>0$ such that 
$$
C_0\partial_{x_1} u_0(x)-u_0(x) \ge 0
$$
in $B_R^+$.
\end{lem}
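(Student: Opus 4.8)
The plan is to compute $\partial_{x_1} u_0$ explicitly and then bound $u_0$ by a constant multiple of it on the bounded set $B_R^+$. We have $\partial_{x_1} u_0(x) = a x_n$, so the claimed inequality reads $C_0 a x_n - a x_1 x_n - b x_n^2 \ge 0$, i.e. $x_n\,(C_0 a - a x_1 - b x_n) \ge 0$. Since $x_n > 0$ on $B_R^+$, this reduces to showing $C_0 a \ge a x_1 + b x_n$ throughout $B_R^+$. On $B_R^+$ one has $|x_1| \le R$ and $0 < x_n \le R$, hence $a x_1 + b x_n \le aR + |b| R = (a + |b|) R$. Therefore it suffices to choose
$$
C_0 := \frac{(a+|b|)R}{a},
$$
which is a positive constant depending only on $a$, $b$, and $R$, as required. (Any larger constant works as well, so the choice is far from sharp.)

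Writing it out carefully, first I would note that $u_0$ is a polynomial, so $\partial_{x_1} u_0 = a x_n$ holds classically everywhere, and in particular on $\overline{B_R^+}$. Second, I would factor the difference $C_0 \partial_{x_1} u_0 - u_0 = x_n (C_0 a - a x_1 - b x_n)$ and use the sign of $x_n$. Third, I would estimate the linear factor from below by using the trivial bounds $x_1 \ge -R$ is irrelevant here — we need an \emph{upper} bound on $a x_1 + b x_n$ — so I use $x_1 \le |x| < R$ and $x_n \le |x| < R$ to get $a x_1 + b x_n \le (a + |b|)R$, and then pick $C_0$ as above so that $C_0 a - a x_1 - b x_n \ge C_0 a - (a+|b|)R = 0$ on $B_R^+$.

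There is essentially no obstacle here: the statement is a completely elementary estimate, included only to record the precise monotonicity constant used in the proof of Proposition \ref{kee}. The only point requiring the smallest care is the sign bookkeeping — one must remember that the relevant bound is an upper bound on $a x_1 + b x_n$ (not a two-sided bound), and that $b$ may be negative, which is why $|b|$ appears in $C_0$. One could alternatively invoke the fact that $u_0$ and $\partial_{x_1} u_0 = a x_n$ both vanish on $\{x_n = 0\}$ and that $u_0 / (a x_n) = x_1 + (b/a) x_n$ is bounded on $\overline{B_R^+}$, but the direct factorization above is cleaner.
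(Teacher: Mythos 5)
Your proof is correct and follows essentially the same route as the paper: factor out $x_n$, reduce to the linear inequality $C_0 a \ge a x_1 + b x_n$, and bound the right side using $|x_1|, x_n \le R$. In fact your constant $C_0 = (a+|b|)R/a$ is slightly more careful than the paper's $\frac{b}{a}R + R$, since the absolute value correctly covers the case $b<0$ (where the paper's stated constant, taken literally, would be too small).
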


\begin{proof}
The condition is equivalent to $ax_n(C_0-x_1) \ge b x_n^2$. Since $x_1 \le R$ and $0\le x_n \le R$, it follows that any $C_0 \ge \frac{b}{a}R+R$ satisfies the condition.
\end{proof}

\bibliographystyle{alpha}

\bibliography{ngonref2}

\begin{thebibliography}{CRS96}

\bibitem[And07]{MR2281197}
John Andersson.
\newblock On the regularity of a free boundary near contact points with a fixed
  boundary.
\newblock {\em J. Differential Equations}, 232(1):285--302, 2007.

\bibitem[Caf77]{MR0454350}
Luis~A. Caffarelli.
\newblock The regularity of free boundaries in higher dimensions.
\newblock {\em Acta Math.}, 139(3-4):155--184, 1977.

\bibitem[Cha95]{MR1349309}
S.~Jonathan Chapman.
\newblock A mean-field model of superconducting vortices in three dimensions.
\newblock {\em SIAM J. Appl. Math.}, 55(5):1259--1274, 1995.

\bibitem[CRS96]{MR1388106}
S.~J. Chapman, J.~Rubinstein, and M.~Schatzman.
\newblock A mean-field model of superconducting vortices.
\newblock {\em European J. Appl. Math.}, 7(2):97--111, 1996.

\bibitem[CS02]{MR1897393}
L.~Caffarelli and J.~Salazar.
\newblock Solutions of fully nonlinear elliptic equations with patches of zero
  gradient: existence, regularity and convexity of level curves.
\newblock {\em Trans. Amer. Math. Soc.}, 354(8):3095--3115, 2002.

\bibitem[ESS98]{MR1664550}
Charles~M. Elliott, Reiner Sch\"{a}tzle, and Barbara E.~E. Stoth.
\newblock Viscosity solutions of a degenerate parabolic-elliptic system arising
  in the mean-field theory of superconductivity.
\newblock {\em Arch. Ration. Mech. Anal.}, 145(2):99--127, 1998.

\bibitem[FS14]{MR3198649}
Alessio Figalli and Henrik Shahgholian.
\newblock A general class of free boundary problems for fully nonlinear
  elliptic equations.
\newblock {\em Arch. Ration. Mech. Anal.}, 213(1):269--286, 2014.

\bibitem[IM16a]{MR3513142}
Emanuel Indrei and Andreas Minne.
\newblock Nontransversal intersection of free and fixed boundaries for fully
  nonlinear elliptic operators in two dimensions.
\newblock {\em Anal. PDE}, 9(2):487--502, 2016.

\bibitem[IM16b]{MR3542613}
Emanuel Indrei and Andreas Minne.
\newblock Regularity of solutions to fully nonlinear elliptic and parabolic
  free boundary problems.
\newblock {\em Ann. Inst. H. Poincar\'e Anal. Non Lin\'eaire},
  33(5):1259--1277, 2016.

\bibitem[Ind18]{I2}
Emanuel Indrei.
\newblock Non-transversal intersection of the free and fixed boundary in the
  mean-field theory of superconductivity.
\newblock {\em arXiv:1810.08724}, 2018.

\bibitem[Ind19]{I}
Emanuel Indrei.
\newblock Boundary regularity and non-transversal intersection for the fully
  nonlinear obstacle problem.
\newblock {\em Comm. Pure Appl. Math.}, doi.org/10.1002/cpa.21814, 2019.

\bibitem[KN77]{MR0440187}
D.~Kinderlehrer and L.~Nirenberg.
\newblock Regularity in free boundary problems.
\newblock {\em Ann. Scuola Norm. Sup. Pisa Cl. Sci. (4)}, 4(2):373--391, 1977.

\bibitem[MM04]{MR2065018}
Norayr Matevosyan and Peter~A. Markowich.
\newblock Behavior of the free boundary near contact points with the fixed
  boundary for nonlinear elliptic equations.
\newblock {\em Monatsh. Math.}, 142(1-2):17--25, 2004.

\bibitem[PSU12]{MR2962060}
Arshak Petrosyan, Henrik Shahgholian, and Nina Uraltseva.
\newblock {\em Regularity of free boundaries in obstacle-type problems}, volume
  136 of {\em Graduate Studies in Mathematics}.
\newblock American Mathematical Society, Providence, RI, 2012.

\bibitem[Saf94]{Saf}
M.V. Safonov.
\newblock On the boundary value problems for fully nonlinear elliptic equations
  of second order.
\newblock {\em Mathematics Research Report No. MRR 049-94, Canberra: The
  Australian National University}, 1994.

\bibitem[Sch77]{MR0516201}
David~G. Schaeffer.
\newblock Some examples of singularities in a free boundary.
\newblock {\em Ann. Scuola Norm. Sup. Pisa Cl. Sci. (4)}, 4(1):133--144, 1977.

\bibitem[SU03]{MR1950478}
Henrik Shahgholian and Nina Uraltseva.
\newblock Regularity properties of a free boundary near contact points with the
  fixed boundary.
\newblock {\em Duke Math. J.}, 116(1):1--34, 2003.

\bibitem[Ura96]{MR1392033}
N.~N. Uraltseva.
\newblock {$C^1$} regularity of the boundary of a noncoincident set in a
  problem with an obstacle.
\newblock {\em Algebra i Analiz}, 8(2):205--221, 1996.

\end{thebibliography}

\signei

\end{document}